\newcommand{\from}{\colon\thinspace} 
\newtheorem{theorem}{Theorem}[section]
\newtheorem{proposition}[theorem]{Proposition}
\newtheorem{lemma}[theorem]{Lemma}
\newtheorem{corollary}[theorem]{Corollary}
\newtheorem*{namedtheorem}{\theoremname}
\newcommand{\theoremname}{testing}
\newenvironment{named}[1]{\renewcommand{\theoremname}{#1}\begin{namedtheorem}}{\end{namedtheorem}}
\theoremstyle{definition}
\newtheorem{definition}[theorem]{Definition}
\newtheorem{remark}[theorem]{Remark}
\newtheorem{question}[theorem]{Question}
\title[Satellite knots and positive braids with full twists]{Satellite knots that cannot be represented by positive braids with full twists}
\author{Thiago de Paiva}
\address[]{School of Mathematics, Monash University, VIC 3800, Australia }
\email[]{thiago.depaivasouza@monash.edu}
\begin{document}

\begin{abstract}
A positive braid with at least one full twist is known to be a minimal braid, i.e, it achieves the braid index for its closure. In this paper we find knots that are the closure of positive minimal braids that cannot be represented by positive braids with full twists. More precisely, we show that some satellite knots with companions and patterns given as the closure of positive braids cannot be represented as the closure of positive braids with full twists. As a consequence, we find infinitely many satellite knots with companions and patterns being Lorenz knots that are not Lorenz knots. This gives an answer to the question whether a satellite knot having Lorenz pattern and companion is also a Lorenz knot, originally addressed by Birman and Williams in a special case in the 1980s.
\end{abstract} 

\maketitle

\section{Introduction}


The braid index of a link is defined to be the smallest number of strings needed to represent it as the closure of a braid. The braid index is one of the simplest link invariants and is still not fully understood.

Franks and Williams proved that positive braids with at least one positive full twist are minimal braids \cite[Corollary 2.4]{Franks}. In this paper, we study when knots given by the closure of positive braids cannot reach the braid index with positive braids with at least one positive full twist. More precisely, we study when satellite knots with companions and patterns given by the closure of positive braids cannot be represented by positive braids with at least one positive full twist.

To obtain a satellite knot we start with a knot  $P$ inside the solid torus $S^1\times D^2$ that is not isotopic into a ball nor to $S^1\times 0$ in $S^1\times D^2$.  Next, we consider a non-trivial knot $C$ and a homeomorphism $f\from D^2\times S^1\to N(C)$ that takes $D^2\times S^1$ to the tubular neighbourhood $N(C)$ of $C$.
Then, the image $K = f(P)$ is the satellite knot with \emph{pattern} $P$ and \emph{companion} $C$.
However, there are infinitely many homeomorphisms taking $D^2\times S^1$ to $N(C)$. Hence we need to give $f$ an additional condition to make $K$ well-defined. With this in mind, the convention is that $f$ must send the standard longitude $S^1\times\{1\}$ of $S^1\times D^2$ to the standard longitude of $N(C)$. In other words, $f$ is not allowed to introduce additional twisting. 

Denote by $\sigma_1, \dots, \sigma_{r_n-1}$ the standard generators of the braid group $B_{r_n}$.

Consider a positive braid $B$ with $b$ strands. We say that $B$ has $k$ full twists if it has the braid $(\sigma_1\dots\sigma_{b-1})^{kb}$ as a sub-braid with $k>0$.
 
We find knots with positive minimal braids that cannot be represented by any positive braids with at least one full twists as minimal braids. Our main theorem is the following.

\begin{theorem}\label{main}
Let $C$ be a knot with braid index equal to $d$ and a positive minimal braid with $c_1$ crossings. Let $P$ be a knot given by a positive braid $B = (\sigma_1\dots\sigma_{b-1})^{c_1b+k}B'$ with $B'$ a braid with $b$ strands and $c_2$ crossings such that
$$c_1b^2 + k(b-1)+c_2\leq(db+1)(db-1)+1.$$
Then, the satellite knot $K$ with pattern $P$ and companion $C$ cannot be represented as a positive braid with at least one positive full twist.
\end{theorem}

As a corollary of this theorem, we answer one question about Lorenz knots.
 
The meteorologist Edward Lorenz has described a three-dimensional system of ordinary differential equations to predict weather patterns \cite{Lorenz}. The Lorenz equations also arise in simplified models for lasers, dynamos, thermosyphons, electric circuits, chemical reactions, etc. And for some choices of parameters, it has knotted periodic orbits, which are called \emph{Lorenz links}. 
Lorenz links have been given special attention, especially Lorenz knots which are \emph{satellites}. 


There is a conjecture about satellite Lorenz knots, called the Lorenz satellite conjecture, which claims that satellite Lorenz knots have companions and patterns equivalent to Lorenz knots (see \cite[Conjecture 1.2]{de2021satellites}).

The converse of the Lorenz satellite conjecture is a question that naturally arises. See \cite[Question 1.4]{de2021satellites}. More precisely:
\begin{question}\label{mainquestion}
Let $K_1$ and $K_2$ be Lorenz knots. Is the satellite knot with pattern $K_1$ and companion $K_2$ also a Lorenz knot? 
\end{question}

The first results addressing this question gave some partial positive answers. The first was given by Birman and Williams in 1980 when they proved that
if $K$ is a Lorenz knot with crossing number $c$ and $a, b’$ are arbitrary coprime positive integers, then 
the satellite knot with companion the Lorenz knot $K$ and pattern the torus knot $T(a, b’+ ac)$ is a Lorenz
knot \cite[Theorem 6.2]{Knottedperiodicorbits} (this result is valid, but not as general as it is claimed to be if we fix the standard longitudes as we will see in Remark~\ref{remark}). de Paiva and Purcell proved that the answer to Question~\ref{mainquestion} is yes for large families of Lorenz knots \cite[Theorem 1.3, Theorem 6.1, and Theorem 6.2]{de2021satellites}. 




As a corollary of Theorem~\ref{main}, we find infinitely many satellite knots which are not Lorenz knots but have Lorenz knots as companions and patterns:

\begin{corollary}\label{maintheorem}
There are infinitely many satellite knots 
that are not Lorenz knots that have both companions and patterns that are Lorenz knots. 
\end{corollary}

Therefore, we give the first negative answer to Question~\ref{mainquestion}.

The structure of the paper is as follows. In the first section we prove Theorem~\ref{main}. In the second section we prove that every Lorenz link has a positive braid with at least one positive full twist, Theorem~\ref{fulltwist}. And so, we obtain Corollary~\ref{maintheorem}.



\section{Satellite knots and number of crossings} 

In this section we prove Theorem~\ref{main}.

The next lemma is a well-known result in the literature of positive braid theory. We include its proof here for completeness.

\begin{lemma}\label{genus} 
Let $B_1, B_2$ be two positive braids with the same number of strands representing the same knot $K$. Then, $B_1$ and $B_2$ have the same number of crossings.
\end{lemma}

\begin{proof} 
The genus of $K$ is given by the formula
$$\dfrac{c_i-b_i+1}{2},$$
where $c_i$, $b_i$ is the number of crossings, strands, respectively, of $B_i$ with $i = 1$ or $2$ \cite{Genus}. As the genus is a knot invariant and $B_1$ and $B_2$ have the same number of strands, it follows that $B_1$ and $B_2$ have the same number of crossings.
\end{proof}

\begin{lemma}\label{crossings} 
Suppose that $B$ is a positive braid with $p$ strands and at least one full twist on $p$ strands with closure representing a knot. Then, The number of crossings of $B$ is more than $$p(p-1) + p - 2.$$
\end{lemma}

\begin{proof} 
By contradiction, suppose $B$ has less than or equal to $p(p-1) + p - 2$ crossings.

The braid $B$ has at least $p(p-1)$ crossings because $B$ has at least one positive full twist. Hence the braid $B$ is formed by adding at most $p - 2$ crossings to the braid $F$ given by a positive full twist on $p$ strands.
 
The full twist $F$ is obtained by applying a full twist on $p$ unknotted link components. As full twists are homeomorphisms, the closure of $F$ still has $p$ link components. 

When we add a crossing to $F$, we obtain a braid $F_1$ whose closure has $p-1$ link components. Now if we add a crossing to $F_1$, we obtain a braid $F_2$ whose closure has at least $p-2$ link components. Similarly, the closure of the braid formed by adding a new crossings to $F_2$ has at least $p-3$ link components.
We conclude that the closure of $B$ has at least 2 link components as $B$ is obtained by adding at most $p - 2$ crossings to $F$.
However, the closure of $B$ should be a knot, a contradiction.
\end{proof}

\begin{lemma}\label{isotopy} 
Consider $B = \sigma_{i_1}\dots \sigma_{i_{p-1}}$ a positive braid with $p$ strands and $p-1$ crossings representing the trivial knot. Then, there is an isotopy in the complement of the braid axis taking the closure of $B$ to the closure of the braid $\sigma_{1}\dots \sigma_{p-1}$.
\end{lemma}

\begin{proof} 
Each crossing $\sigma_{j}$ with $j\in \lbrace 1, \dots, p-1 \rbrace$ appears at least once in $B$ otherwise the closure of $B$ would be a link with more than one component, which is not possible. Furthermore, each crossing $\sigma_{j}$ with $j\in \lbrace 1, \dots, p-1 \rbrace$ appears exactly once in $B$ as $B$ has $p-1$ crossings.

If $\sigma_{i_1}\neq \sigma_{1}$, then we push all crossings in $B$ before $\sigma_{1}$ once around the braid closure so that $B$ is transformed into the braid $B = \sigma_{1}\sigma_{i'_2}\dots \sigma_{i'_{p-1}}$ by an isotopy that travels around  the braid axis. If $\sigma_{i'_2}\neq \sigma_{2}$, then we push all crossings in $B_1$ between $\sigma_{1}$ and $\sigma_{2}$ once around the braid closure to obtain a braid $B_2 = \sigma_{1} \sigma_{2} \sigma_{i''_3}\dots \sigma_{i''_{p-1}}$. Continue until we obtain the braid $B_{p-1} = \sigma_{1}\dots \sigma_{p-1}$.
\end{proof}

\begin{lemma}\label{lemma1} 
Suppose that $B$ is a positive braid with $p$ strands, one full twist on $p$ strands, and with closure representing a knot. Then,
if $B$ has $$(p+1)(p-1)$$ crossings, its closure is the $(p, p+1)$-torus knot.
\end{lemma}

\begin{proof} 
We start by applying a negative full twist on the braid axis of $B$ to remove $p(p-1)$ crossings from $B$. Then, we obtain a braid $B'$ with $p$ strands and $p-1$ crossings. Since $B'$ represents a knot, each crossing $\sigma_{i}$ with $i\in \lbrace 1, \dots, p-1 \rbrace$ appears exactly once in $B$. Hence the closure of $B'$ is the trivial knot.

By Lemma~\ref{isotopy}, there is an isotopy in the complement of the braid axis taking the closure of $B'$ to the closure of the braid $\sigma_{1}\dots \sigma_{p-1}$. As full twists commute with other elements in the braid group, there is an isotopy in the complement of the braid axis of $B$ taking the closure of $B$ to the closure of the braid $(\sigma_{1}\dots \sigma_{p-1})^{p+1}$, which represents the $(p, p+1)$-torus knot.
\end{proof} 

\begin{lemma}\label{lemma2} 
Suppose that $B$ is a positive braid with $p$ strands and one full twist on $p$ strands. 
If $B$ has $$(p+1)(p-1) +1$$ crossings, then its closure cannot be a knot.
\end{lemma}

\begin{proof}
Suppose that $B$ represents a knot. Then, after applying a negative full twist on the braid axis of $B$, we obtain a braid $B'$ with $p$ strands and $(p-1)+1$ crossings. Each crossing $\sigma_{i}$ with $i\in \lbrace 1, \dots, p-1 \rbrace$ should appear at least once in $B'$. As $B'$ has $p$ crossings, there is a crossing $\sigma_{j}$ with $j\in \lbrace 1, \dots, p-1 \rbrace$
 which appears twice in $B'$. Therefore, we apply some destabilization moves to transform $B'$ into the braid $(\sigma_1)^2$, which is the $(2, 2)$-torus link with two components, a contradiction.
\end{proof} 


\begin{lemma}\label{lemma3} 
Suppose that $B$ is a positive braid with $p$ strands and at least one full twist on $p$ strands. If the closure of $B$ represents a satellite knot, then its number of crossings is more than $$(p+1)(p-1) +1.$$ 
\end{lemma}

\begin{proof} 
It follows from Lemma~\ref{crossings} that $B$ has more than $p(p-1) + p - 2$ crossings. Since $B$ has closure representing a satellite knot, it follows from Lemma~\ref{lemma1} that $B$ has more than $(p+1)(p-1)$ crossings. Finally, $B$ has more than $(p+1)(p-1)+1$ crossings due to Lemma~\ref{lemma2}.
\end{proof}

Theorem 1.4 of \cite{unexpected} illustrates satellite knots with positive braids with $p$ strands, one full twist on $p$ strands, and $(p+1)(p-1) +2$ crossings.

\begin{named}{Theorem~\ref{main}}
Let $C$ be a knot with braid index equal to $d$ and a positive minimal braid with $c_1$ crossings. Let $P$ be a knot given by a positive braid $B = (\sigma_1\dots\sigma_{b-1})^{c_1b+k}B'$ with $B'$ a braid with $b$ strands and $c_2$ crossings such that
$$c_1b^2 + k(b-1)+c_2\leq(db+1)(db-1)+1.$$
Then, the satellite knot $K$ with pattern $P$ and companion $C$ cannot be represented as a positive braid with at least one positive full twist.
\end{named}

\begin{proof} 
The knot $K$ has braid index equal to $db$ by \cite[Theorem 1]{Williams}.

Suppose that $K$ can be represented by a positive braid $B$ with $n$ strands and at least one full twist on $n$ strands. 

By \cite[Corollary 2.4]{Franks}, the number $n$ is equal to the braid index of $B$. As $B$ represents $K$, we conclude that $n$ is equal to $db$.

The minimal positive braid for $C$ induces a minimal positive braid $B'$ for $K$. Furthermore, since this minimal positive braid for $C$ has $c_1$ crossings, then after applying the homeomorphism that tangles the knot $P$ along the knot $C$ preserving 
the standard longitudes to obtain $K$, the knot inside the companion receives $c_1$ negative full twists on the $b$ strands. So, $B'$ has $$c_1b^2 + k(b-1)+c_2$$ crossings. 

By Lemma~\ref{genus}, $B$ has the same amount of crossings as $B'$. However, this contradicts Lemma~\ref{lemma3} as $c_1b^2 + k(b-1)+c_2$ is less than or equal to $(db+1)(db-1)+1$.
\end{proof}

The family of T-links are defined as follows:
for $2\leq r_1< \dots < r_k$, and all $s_i>0$, the T-link $T((r_1,s_1), \dots, (r_k,s_k))$ is defined to be the closure of the following braid, which we call the standard braid:
\[ (\sigma_1\sigma_2\dots\sigma_{r_1-1})^{s_1}(\sigma_1\sigma_2\dots\sigma_{r_2-1})^{s_2}\dots(\sigma_1\sigma_2\dots\sigma_{r_k-1})^{s_k}.\]
In particular, we conclude that torus knots are T-links from these representations for T-links (or \cite[Theorem 6.1]{Knottedperiodicorbits}).
Birman and Kofman showed that T-links are equivalent to non-trivial Lorenz links \cite[Theorem 1]{newtwis}.

\begin{corollary}\label{corollary1}
Consider $c_1, d_1, \dots, c_n, d_n, a, b, k$ positive integers such that
the T-link $P = T( (c_1, d_1), \dots, (c_n, d_n), (b, (a-1)(a+1)b + k))$ is a knot and
$$b^2 - kb + k - \sum_{i=1}^{n} d_i(c_i-1) \geq 0.$$ 
Then, the satellite knot $K$ with pattern the T-knot $P$ and companion the torus knot $C = T(a, a+1)$ cannot be represented as a positive braid with full twists.
\end{corollary}

\begin{proof} 
The torus knot $C = T(a, a+1)$ has a minimal positive braid with $a$ strands, one full twist on $a$ strands, and $(a-1)(a+1)$ crossings.
  
The T-knot $P = T( (c_1, d_1), \dots, (c_n, d_n), (b, (a-1)(a+1)b + k))$ is given by the positive braid
\[ (\sigma_1\dots\sigma_{c_1-1})^{d_1}\dots(\sigma_1\dots\sigma_{c_n-1})^{d_n}(\sigma_1\dots\sigma_{b-1})^{(a-1)(a+1)b + k}.\]  
Consider the braid $B' = (\sigma_1\dots\sigma_{c_1-1})^{d_1}\dots(\sigma_1\dots\sigma_{c_n-1})^{d_n}$ with 
$$(c_1-1)d_1+\dots +(c_n-1)d_n$$crossings. The knot $P$ is given by the braid $B = (\sigma_1\dots\sigma_{b-1})^{(a-1)(a+1)b + k}B'$.

The inequality of the hypothesis implies that
$$- b^2 + kb - k + \sum_{i=1}^{n} d_i(c_i-1)\leq 0,$$
which is equivalent to  
$$a^2b^2-ab^2+ab^2 -b^2 + kb - k +  \sum_{i=1}^{n} d_i(c_i-1)\leq a^2b^2 -ab+ab- 1+1,$$ or simply
$$(a-1)(a+1)b^2 + k(b-1)+ \sum_{i=1}^{n} d_i(c_i-1)\leq (ab+1)(ab-1)+1.$$

Therefore, the satellite knot $K$ with pattern $P$ and companion $C$ cannot be represented as a positive braid with at least one positive full twist by Theorem~\ref{main}.
\end{proof}








\section{Minimal Braids for T-links}

In this section we prove that every T-link has a positive braid with at least one positive full twist.

\begin{proposition}\label{proposition10}
Consider $p, q$ positive integers with $p>q>1$. Let $K$ be a link given by a positive braid $B$ with $p$ strands of the form
$$(\sigma_1\dots\sigma_{p-1})^q B'$$ with $B'$ a positive braid with less than or equal to $q$ strands.
Then, the link $K$ has a positive braid with at least one positive full twist.
\end{proposition}

\begin{proof}
We start by deforming the sub-braid $(\sigma_1\dots\sigma_{p-1})^q$ of $B$ to the braid $(\sigma_1\dots\sigma_{q-1})^p$ using the isotopy of the first and second drawings of \cite[Figure 1]{de2022hyperbolic} or \cite[Figure 3]{twofulltwists}, where this isotopy can be described as a rotation around a diagonal of the square that gives the torus in which the $(p, q)$-torus knot lies.
After this, the sub-braid $B'$ is sent to lie horizontally on the meridional (horizontal) lines of the braid $(\sigma_1\dots\sigma_{q-1})^p$. Since $B'$ has less than or equal to $q$ strands, we can push $B'$ down so that it lies 
on the longitudinal (vertical) lines of the braid $(\sigma_1\dots\sigma_{q-1})^p$.
We now have a positive braid with the sub-braid $(\sigma_1\dots\sigma_{q-1})^p$ on the top. Moreover, this braid has at least one full twist as $p>q$.  
\end{proof}

The next proposition generalizes the isopoty of \cite[Lemma 2.1]{de2022torus}, which was based on \cite[Figure 6]{Positively}.

\begin{proposition}\label{isopote}
Consider $B$ a braid with $r$ strands and $p, q$ integers such that $0<q\leq r <p$. Then, there is an isotopy that takes the closure of the braid $B(\sigma_{1}\dots \sigma_{p-1})^{q}$
to the braid
$$(\sigma_{r-1}\dots \sigma_{r-q+1})^{p-r}B(\sigma_{1}\dots \sigma_{r-1})^{q}$$
if $q>1$ and
$$B(\sigma_{1}\dots \sigma_{r-1})^{q}$$
if $q=1$. Moreover, this isotopy happens in the complement of the braid axis of $B$.
\end{proposition}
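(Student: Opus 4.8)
The plan is to pass from $p$ strands down to $r$ strands by a sequence of $p-r$ Markov destabilizations, removing the outermost strand each time and recording the positive crossings that each destabilization leaves behind. The combinatorial engine is the torus-braid identity
\[
(\sigma_1\sigma_2\cdots\sigma_{m-1})^{q}=(\sigma_1\cdots\sigma_{m-2})^{q}\,(\sigma_{m-1}\sigma_{m-2}\cdots\sigma_{m-q}),
\]
valid for $1\le q\le m-1$ and proved by a short induction on $q$ using only the braid relations. Its point is that on the right the top strand $m$ is touched only by the single $\sigma_{m-1}$ leading the trailing word $\sigma_{m-1}\sigma_{m-2}\cdots\sigma_{m-q}$, which primes a destabilization; after $\sigma_{m-1}$ is peeled off, its remaining $q-1$ letters $\sigma_{m-2}\cdots\sigma_{m-q}$ (all on lower strands) are the twisting that gets deposited.

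First I would dispose of $q=1$, which is pure Markov theory: in $B(\sigma_1\cdots\sigma_{p-1})$ the top strand meets only the final $\sigma_{p-1}$, so it destabilizes to $B(\sigma_1\cdots\sigma_{p-2})$, and iterating down to $r$ strands gives $B(\sigma_1\cdots\sigma_{r-1})$ with no leftover twisting, as claimed. For $q>1$ I would induct on $p-r$, carrying the invariant that after $t$ removals, with $m=p-t$, the closure of $B(\sigma_1\cdots\sigma_{p-1})^{q}$ equals the closure of
\[
(\sigma_{m-1}\sigma_{m-2}\cdots\sigma_{m-q+1})^{t}\,B\,(\sigma_1\cdots\sigma_{m-1})^{q}.
\]
A single step applies the identity to split off $\sigma_{m-1}$, uses a cyclic permutation of the closure to carry the freed word $\sigma_{m-2}\cdots\sigma_{m-q}$ to the front and leave $\sigma_{m-1}$ last, destabilizes to delete strand $m$, and then uses the braid relations to merge this freed word with the twist region already present from the previous steps. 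At $t=0$ the invariant is the hypothesis and at $t=p-r$ (so $m=r$) it is exactly the desired conclusion.

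The main obstacle is precisely this merging: I must verify that the $p-r$ packets of freed crossings amalgamate into the single block $(\sigma_{r-1}\cdots\sigma_{r-q+1})^{p-r}$ on the top $q$ strands of the $r$-strand braid, rather than scattering across high strands, and that each destabilization remains legitimate even though the accumulated twist already touches the strand being removed. This amalgamation is cleanest to see geometrically, as the isotopy generalizing \cite[Figure 6]{Positively} and \cite[Lemma 2.1]{de2022torus}: as each outer strand is pulled down across the torus-braid block it crosses the $q$ sweeping strands of $(\sigma_1\cdots\sigma_{m-1})^{q}$ and contributes one copy of $\sigma_{m-1}\cdots\sigma_{m-q+1}$ to the twist region on the outermost surviving strands. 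Finally, the ambient isotopy realizing these moves only slides the outer strands and the torus-braid block and re-braids the strands carrying $B$ among themselves, never passing a strand across an unknotted axis that encircles the strands of $B$; hence the whole isotopy takes place in the complement of the braid axis of $B$, which is the last assertion of the statement. This locality is also what allows an arbitrary braid $B$ to replace the special braids of the cited references without changing the argument.
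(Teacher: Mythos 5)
Your route is genuinely different from the paper's. The paper argues geometrically: it identifies the under strand of the closure joining the $(r-q+1)$-st strand to the $(r+1)$-st strand, pushes it down and shrinks it, and each such move deposits the word $\sigma_{r-1}\cdots\sigma_{r-q+1}$ directly on the leftmost $r$ strands; because the deposited packets land at that fixed position, they stack up as $(\sigma_{r-1}\cdots\sigma_{r-q+1})^{t}$ with no further rewriting, and the whole difficulty you call ``merging'' never arises. Your algebraic scheme (torus-braid identity plus Markov destabilization, inducting on $p-r$) can be made to work, and your identity $(\sigma_1\cdots\sigma_{m-1})^{q}=(\sigma_1\cdots\sigma_{m-2})^{q}(\sigma_{m-1}\cdots\sigma_{m-q})$ is correct, but as written your inductive step has a genuine gap, and it sits exactly at the point you flag. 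For $t\geq 1$, after applying the identity and cyclically permuting, the word is
\[
(\sigma_{m-2}\cdots\sigma_{m-q})\,(\sigma_{m-1}\cdots\sigma_{m-q+1})^{t}\,B\,(\sigma_1\cdots\sigma_{m-2})^{q}\,\sigma_{m-1},
\]
in which $\sigma_{m-1}$ occurs $t+1$ times: once at the end and $t$ times inside the accumulated block. Markov destabilization requires a word of the form $w\sigma_{m-1}$ with $w\in B_{m-1}$, so your stated order ``permute, destabilize, then merge'' is not executable; the merge must come \emph{before} the destabilization, and the merge is precisely what is never proved. Appealing at that moment to the geometric picture of strands sweeping across the torus block is not a proof but a restatement of the claim --- indeed it is essentially the paper's argument invoked without its isotopy.

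The gap is fillable within your framework. Keep the full freed word $\sigma_{m-1}\sigma_{m-2}\cdots\sigma_{m-q}$ together, cyclically permute it to the front, and prove the shuffle identity
\[
(\sigma_{m-1}\cdots\sigma_{m-q})\,(\sigma_{m-1}\cdots\sigma_{m-q+1})^{t}
=(\sigma_{m-2}\cdots\sigma_{m-q})^{t}\,(\sigma_{m-1}\cdots\sigma_{m-q}),
\]
by induction on $t$, the case $t=1$ being a short computation with the braid relations (e.g.\ $\sigma_{m-1}\sigma_{m-2}\sigma_{m-1}=\sigma_{m-2}\sigma_{m-1}\sigma_{m-2}$ plus commutations). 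This simultaneously re-indexes the old packets down by one strand --- which your invariant silently assumes --- and isolates a single occurrence of $\sigma_{m-1}$; after one more cyclic permutation the destabilization is legal and yields $(\sigma_{m-2}\cdots\sigma_{m-q})^{t+1}B(\sigma_1\cdots\sigma_{m-2})^{q}$, your invariant at $t+1$. Separately, note that the final clause of the proposition (the isotopy avoids the braid axis of $B$) does not follow from Markov moves alone, since conjugation and destabilization are a priori only equivalences of closures in $S^{3}$; you must exhibit their geometric realizations and check they fix a neighborhood of $B$, which is the content of your last paragraph but again is asserted rather than argued. In the paper this clause is immediate because the isotopy is constructed geometrically and visibly leaves the sub-braid $B$ untouched.
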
 

\begin{proof}
The case $q=1$ immediately follows by shrinking (or applying some destabilization moves to) the last $p-r$ strands of $B(\sigma_{1}\dots \sigma_{p-1})^{q}$. Hence we assume that $q>1$.

The $(r-q+1)$-st strand anticlockwise goes around the braid closure to pass through the sub-braid $(\sigma_1\dots \sigma_{p-1})^{q}$ and finally ends in the $(r+1)$-st strand as $q\leq r$.
Thus, the $(r-q+1)$-st strand is connected to the $(r+1)$-st strand by an under strand that goes one time around the braid closure as illustrated by the blue strand in Figure~\ref{T1}.  
We push this under strand down and shrink it to reduce one strand from the last braid, as shown in Figure~\ref{T1}. After that, this strand becomes an under strand between the $(r-q+1)$ and the $r$-st strand yielding the braid 
$$(\sigma_{r-1}\dots \sigma_{r-q+1})B(\sigma_1\dots \sigma_{p-2})^{q}.$$

\begin{figure}
\includegraphics[scale=0.7]{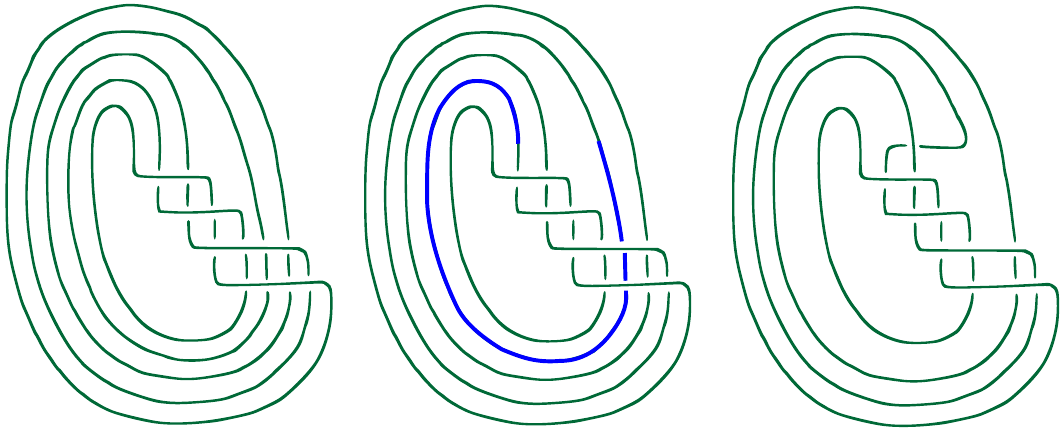} 
  \caption{This series of drawings illustrates how to reduce one strand from the closure of the leftmost braid. In the second drawing, the blue strand anticlockwise goes once around the braid closure.}
\label{T1}
\end{figure}

We see that this isotopy doesn't change the sub-braid $B$. So, it happens in the complement of the braid axis of $B$.

As the sub-braid $(\sigma_{r-1}\dots \sigma_{r-q+1})B$ of the last braid is a braid on the leftmost $r$ strands, the $(r-q+1)$-st strand is still connected to the $(r+1)$-st strand by an under strand that goes one time around the braid closure.
So, we can apply the last isopoty again to obtain the braid
$$(\sigma_{r-1}\dots \sigma_{r-q+1})^2B(\sigma_1\dots \sigma_{p-3})^{q}.$$

We continue applying this procedure and then after doing it $p-r-2$ times, we obtain the braid
$$(\sigma_{r-1}\dots \sigma_{r-q+1})^{p-r}B(\sigma_{1}\dots \sigma_{r-1})^{q}. \qedhere$$
\end{proof}

\begin{definition}
Let $i, j, r$ be positive integers with $i<j$. The $(i, j, r)$-torus braid, denoted by $B_{i,j}^r$, is defined by the braid$$(\sigma_i\dots \sigma_{j-1})^{r}.$$
\end{definition}

\begin{proposition}\label{secondcase}
Consider $p>q>1$ and $r_n\geq q$. 
Then, the T-link $$K = T((r_1, s_1), \dots, (r_n, s_n), (p, q))$$ has a positive braid with at least one positive full twist.
\end{proposition}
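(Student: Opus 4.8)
The plan is to transform the standard braid of $K$ into the shape required by Proposition~\ref{proposition10}, using Proposition~\ref{isopote} to shed the strands that currently block it. Write the standard braid as
\[
\beta = B_{0,r_1}^{s_1}\cdots B_{0,r_{n}}^{s_{n}}\,B_{0,p}^{q},
\qquad r_1<\cdots<r_n=r<p .
\]
The block $B:=B_{0,r_1}^{s_1}\cdots B_{0,r_n}^{s_n}$ only uses $\sigma_1,\dots,\sigma_{r-1}$, hence is a braid on $r$ strands, and $0<q\le r<p$ by hypothesis. So Proposition~\ref{isopote} rewrites the closure of $\beta=B\,B_{0,p}^{q}$ as the closure of
\[
\beta' = (\sigma_{r-1}\cdots\sigma_{r-q+1})^{\,p-r}\;B_{0,r_1}^{s_1}\cdots B_{0,r_{n-1}}^{s_{n-1}}\;B_{0,r}^{\,s_n+q},
\]
a positive braid on only $r=r_n$ strands, where I have merged $B_{0,r_n}^{s_n}B_{0,r}^{q}=B_{0,r}^{s_n+q}$.

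If $s_n+q\ge r$ then $B_{0,r}^{s_n+q}$ already contains the full twist $B_{0,r}^{r}$ and, $\beta'$ being positive, we are done. Otherwise $r>s_n+q>1$, and $B_{0,r}^{s_n+q}$ is the natural candidate for the distinguished factor of Proposition~\ref{proposition10} with $s_n+q$ playing the role of $q$. That proposition then applies as soon as every remaining factor encircles at most $s_n+q$ strands: the prefactor encircles exactly $q\le s_n+q$ strands, while $B_{0,r_i}^{s_i}$ encircles $r_i$ strands, so the hypotheses hold precisely when $r_{n-1}\le s_n+q$. In that event Proposition~\ref{proposition10} produces the required positive braid with a full twist.

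When $r_{n-1}>s_n+q$ I would iterate. Since then $s_n+q<r_{n-1}<r_n$, Proposition~\ref{isopote} can be applied again to the factor $B_{0,r}^{s_n+q}$, peeling the top radius down to $r_{n-1}$ and raising the trailing exponent to $s_{n-1}+s_n+q$ while spawning a second descending prefactor. At each step the trailing exponent strictly grows and the governing radius strictly drops, so after finitely many steps either the exponent reaches the current top radius, giving a full twist outright, or it overtakes the current second-largest radius, at which point every surviving $B_{0,r_i}^{s_i}$ encircles strictly fewer strands than the exponent and Proposition~\ref{proposition10} finishes the argument.

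The step I expect to be the real obstacle is controlling the \emph{descending} prefactors $(\sigma_{a-1}\cdots\sigma_{a-e+1})^{m}$ generated along the way: they are not ascending torus braids $B_{i,j}^{s}$, and they can sit on strands that overlap the blocks $B_{0,r_i}^{s_i}$, so one must check that re-applying Proposition~\ref{isopote} is legitimate in their presence and, at the end, that each prefactor can be absorbed into the twist-circle bookkeeping of Proposition~\ref{proposition10}. The key points to verify are that every prefactor encircles at most the value of $q$ used in the final application of Proposition~\ref{proposition10}, and that a descending torus braid decomposes into full and/or half twists along circles encircling that many strands, the mirror image of Lemma~\ref{half-twist}. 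Granting these, the prefactors play exactly the role of the small torus braids already handled there, and the reduction goes through.
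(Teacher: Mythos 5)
Your proposal follows essentially the same route as the paper's proof: apply Proposition~\ref{isopote} to merge the last two torus braids and produce the descending prefactor, split into the same three cases ($s_n+q\geq r_n$; $r_{n-1}\leq s_n+q<r_n$ via Proposition~\ref{proposition10}; otherwise iterate), and terminate because the trailing exponent grows while the governing radius drops. The obstacles you flag at the end --- that the descending prefactors are not literally ascending torus braids $B_{i,j}^{s}$, and that re-applying Proposition~\ref{isopote} in their presence needs justification --- are exactly the points the paper itself treats only implicitly (it pushes the prefactor around the closure, applies the isotopy so the prefactor is carried and reindexed, and then invokes Proposition~\ref{proposition10} without comment), so your treatment is, if anything, more careful about the bookkeeping than the paper's own.
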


\begin{proof}
By Proposition~\ref{isopote}, there is an isotopy that takes the standard braid of $K$ to the braid
\begin{align*}
&B_1 = (\sigma_{r_n-1}\dots \sigma_{r_n-q+1})^{p-r_n}(\sigma_1\dots \sigma_{r_1-1})^{s_1} \dots (\sigma_1\dots \sigma_{r_{n-1}-1})^{s_{n-1}} \\
&(\sigma_1\dots \sigma_{r_n-1})^{s_n+q}.  
\end{align*}

If $s_n+q \geq r_n$, then $B_1$ is itself a positive braid with at least one positive full twist. 
If not, then we push the sub-braid $(\sigma_{r_n-1}\dots \sigma_{r_n-q+1})^{p-r_n}$ with $q$ strands of  $B_1$ in an anticlockwise direction until it is between the $(1, r_{n-1}, s_{n-1})$ and the $(1, r_{n}, s_{n}+q)$-torus braid (see Figure~\ref{T1} for the direction we close the braids), as illustrated in Figure~\ref{T2}, so that we obtain an equivalent braid to $B_1$ of the form $(\sigma_1\dots \sigma_{r_n-1})^{s_n+q}B_1'$ with $B_1'$ a positive braid with $s_n+q$, $r_{n-1}$ strands if $s_n+q\geq r_{n-1}$, $s_n+q< r_{n-1}$, respectively. If $s_n+q<r_n$ and $s_n+q\geq r_{n-1}$ (possibly $n = 1$), the result follows from Proposition~\ref{proposition10}.

\begin{figure}
\includegraphics[scale=1.3]{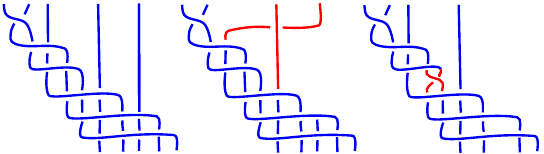}  
 \caption{The first braid represents the T-link $T((2, 1), (3, 2), (4, 1), (5, 2))$. We apply the isotopy of Proposition~\ref{isopote} to the first braid to obtain the second braid. Then, we push the first crossing of the type $\sigma_{3}$ once in an anticlockwise direction to obtain the third braid.}
\label{T2}
\end{figure}

Consider next that $s_n+q< r_{n-1}$, $s_n+q< r_n$, and $n> 1$. 
Then, we apply the isotopy of Proposition~\ref{isopote} to $(\sigma_1\dots \sigma_{r_n-1})^{s_n+q}B_1'$ and push the $(\sigma_{r_n-1}\dots \sigma_{r_n-q+1})^{p-r_n}$  back to the top. After that, we obtain the braid
\begin{align*}
&B_2 = (\sigma_{r_{n-1}-1}\dots \sigma_{r_{n-1}-q+1})^{p-r_n}(\sigma_{r_{n-1}-1}\dots \sigma_{r_{n-1}-s_n-q+1})^{r_n-r_{n-1}} \\
&(\sigma_1\dots \sigma_{r_1-1})^{s_1} \dots (\sigma_1\dots \sigma_{r_{n-2}-1})^{s_{n-2}} (\sigma_1\dots \sigma_{r_{n-1}-1})^{s_{n-1} + s_n +q}.  
\end{align*}

If $s_{n-1} + s_n +q \geq r_{n-1}$, then $B_2$ is itself a positive braid with at least one positive full twist. If not, we push the sub-braid $$(\sigma_{r_{n-1}-1}\dots \sigma_{r_{n-1}-q+1})^{p-r_n}(\sigma_{r_{n-1}-1}\dots \sigma_{r_{n-1}-s_n-q+1})^{r_n-r_{n-1}}$$ with $s_n+q$ strands in an anticlockwise direction until it is between the $(1, r_{n-2}, s_{n-2})$ and the $(1, r_{n-1}, s_{n-1} + s_n +q)$-torus braid so that we obtain an equivalent braid to $B_2$ of the form $(\sigma_1\dots \sigma_{r_{n-1}-1})^{s_{n-1} + s_n +q}B_2'$ with $B_2'$ a positive braid with $s_{n-1} + s_n +q$, $r_{n-2}$ strands if $s_{n-1} + s_n +q\geq r_{n-2}$, $s_{n-1} + s_n +q< r_{n-2}$, respectively. If $s_{n-1} + s_n +q < r_{n-1}$ and $s_{n-1} + s_n +q \geq r_{n-2}$ (possibly $n = 2$), then the result follows from Proposition~\ref{proposition10}. Otherwise, $s_{n-1} + s_n +q < r_{n-2}$, $s_{n-1} + s_n +q < r_{n-1}$, and $n > 2$. Then, we continue likewise until we obtain a positive braid with at least one positive full twist for $K$. 
\end{proof} 

\begin{theorem}\label{fulltwist}
Every T-link has a positive braid with at least one positive full twist.
\end{theorem}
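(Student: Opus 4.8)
The plan is to reduce the general T-link to the special form already handled by Proposition~\ref{secondcase}. Recall that a T-link is defined as
\[
T((r_1,s_1),\dots,(r_k,s_k)), \qquad 2\leq r_1<\dots<r_k, \quad s_i>0,
\]
and its standard braid lives on $r_k$ strands. The outermost torus braid is $(\sigma_1\dots\sigma_{r_k-1})^{s_k}$. Set $p=r_k$ and $q=s_k$. There are two cases according to the size of $q$ relative to the other parameters, and the main work is simply verifying that each case matches a hypothesis of an earlier result.

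First I would dispose of the case $q=s_k\geq r_k$. Here the final torus braid $(\sigma_1\dots\sigma_{r_k-1})^{s_k}$ already contains at least one full twist on all $r_k$ strands, since a full twist on $r_k$ strands is $(\sigma_1\dots\sigma_{r_k-1})^{r_k}$ and $s_k\geq r_k$. Thus the standard braid is itself a positive braid with at least one positive full twist and there is nothing more to prove. The remaining possibility is $s_k<r_k$, equivalently $q<p$. Since $r_k>r_{k-1}\geq\dots$, we also have $r_k>1$, and because every $s_i>0$ and $r_k\geq 2$ the braid is genuinely positive.

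In the case $q=s_k<r_k=p$, I want to invoke Proposition~\ref{secondcase} with the roles matched as follows: write
\[
T((r_1,s_1),\dots,(r_k,s_k)) = T((r_1,s_1),\dots,(r_{k-1},s_{k-1}),(p,q))
\]
with $p=r_k$ and $q=s_k$. The hypotheses of Proposition~\ref{secondcase} are $p>q>1$ and $r_{k-1}\geq q$ (taking $r=r_{k-1}$ there). The inequality $p>q$ is exactly $r_k>s_k$, which is the case we are in. So the only genuine issues are whether $q>1$ and whether $r_{k-1}\geq q$. If both hold, Proposition~\ref{secondcase} applies directly and yields the desired positive braid with a full twist.

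The main obstacle is therefore handling the leftover cases where $q=1$ or $r_{k-1}<q$. When $q=s_k=1$, the final syllable $(\sigma_1\dots\sigma_{r_k-1})$ is a single pass that, by a destabilization, can be absorbed (it reduces the braid to a T-link on fewer strands, namely $T((r_1,s_1),\dots,(r_{k-1},s_{k-1}+1))$ after reindexing, or to a simpler link), so I would induct on $k$, the base case being a torus link $T(p,q)$ which by Proposition~\ref{proposition10} (or directly, since a torus braid $B_{0,p}^{q}$ with $q>1$ reduces to $B_{0,q}^{p}$ carrying $p>q$ full twists, and the $q=1$ torus link is trivial) has the required form. When $r_{k-1}<q\leq r_k$, the hypothesis $r\geq q$ of Proposition~\ref{secondcase} fails for $r=r_{k-1}$, but this is precisely the situation controlled by Proposition~\ref{proposition10}: after applying the isotopy of Proposition~\ref{isopote} the largest torus braid becomes $B_{0,p}^{q}$ on top while every earlier syllable encircles at most $q$ strands, matching the hypothesis $b_i-a_i+1\leq q$ of Proposition~\ref{proposition10}, which then delivers a full twist. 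Assembling these pieces, every T-link falls into one of the covered cases, completing the proof.
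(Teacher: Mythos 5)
Your proposal follows essentially the same route as the paper's proof: the same case analysis ($q=s_k\geq r_k$ handled directly by the standard braid; $q=1$ removed by destabilization and induction; $q<p$ with $r_{k-1}\geq q$ sent to Proposition~\ref{secondcase}; $q<p$ with $r_{k-1}<q$ sent to Proposition~\ref{proposition10}), and the case split is exhaustive, so the structure is correct. One detail needs repair, however: in the case $1<q<p$ with $r_{k-1}<q$, you say that you first apply the isotopy of Proposition~\ref{isopote} to put $B_{0,p}^{q}$ ``on top.'' Proposition~\ref{isopote} requires $q\leq r$, where $r$ is the number of strands of the sub-braid $B$; here that sub-braid lives on $r_{k-1}<q$ strands, so the proposition simply does not apply in this case. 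Fortunately the step is also unnecessary: the standard braid $(\sigma_1\dots\sigma_{r_1-1})^{s_1}\cdots(\sigma_1\dots\sigma_{r_{k-1}-1})^{s_{k-1}}(\sigma_1\dots\sigma_{p-1})^{q}$ already has the form demanded by Proposition~\ref{proposition10}, since every syllable other than $B_{0,p}^{q}$ has $a_i=0$ and $b_i=r_i\leq r_{k-1}<q$ (note that for $a_i=0$ the hypothesis is $b_i\leq q$, not $b_i-a_i+1\leq q$ as you wrote), and Proposition~\ref{proposition10} itself takes care of repositioning the syllables and rotating $B_{0,p}^{q}$ into $B_{0,q}^{p}$. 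Deleting the spurious appeal to Proposition~\ref{isopote} makes your argument coincide with the paper's; the only remaining (harmless) difference is the boundary case $r_{k-1}=q$, which the paper routes through Proposition~\ref{proposition10} while you route it through Proposition~\ref{secondcase} --- both work.
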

 
\begin{proof} 
Consider $K$ the T-link $T((r_1, s_1), \dots, (r_n, s_n), (p, q))$.

If $q = 1$, then $K$ is equivalent to the T-link $T((r_1, s_1), \dots, (r_n, s_n +1))$ by Proposition~\ref{isopote}. So, we can assume that $q>1$.

If $q\geq p$, then the standard braid of $K$ is itself a positive braid with at least one positive full twist.

If $p>q\geq r_n$ (possibly $n = 0$), then $K$ has a positive braid with at least one positive full twist by Proposition~\ref{proposition10}

Finally, if $r_n>q$, then $K$ also has a positive braid with at least one positive full twist from Proposition~\ref{secondcase}.
\end{proof}

\begin{corollary}\label{corollary2} 
Consider $c_1, d_1, \dots, c_n, d_n, a, b, k$ positive integers such that
the T-link $P = T( (c_1, d_1), \dots, (c_n, d_n), (b, (a-1)(a+1)b + k))$ is a knot and
$$b^2 - kb + k - \sum_{i=1}^{n} d_i(c_i-1) \geq 0.$$ 
Then, the satellite knot $K$ with pattern the T-knot $P$ and companion the torus knot $T(a, a+1)$ is not a T-knot.
\end{corollary}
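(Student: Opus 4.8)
The plan is to read this off as a formal consequence of the immediately preceding Theorem together with Theorem~\ref{fulltwist}. The preceding Theorem shows that, under the hypotheses listed, the satellite knot $K$ admits no positive braid representative carrying at least one positive full twist. On the other hand, Theorem~\ref{fulltwist} asserts that every T-link \emph{does} admit exactly such a representative. Since a T-knot is simply a T-link that happens to be a knot, these two facts cannot both hold for $K$, and the contradiction is the whole content of the corollary.

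Concretely, I would argue by contradiction. First I would suppose that $K$ is a T-knot. Then $K$ is in particular a T-link, so Theorem~\ref{fulltwist} supplies a positive braid for $K$ carrying at least one positive full twist. This directly contradicts the conclusion of the preceding Theorem, whose entire assertion is that no such braid exists for $K$ under precisely these hypotheses. Hence the assumption fails, and $K$ is not a T-knot, as claimed.

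I do not anticipate any genuine obstacle here: the corollary is a purely formal deduction from the two cited results. The only point worth making explicit is the harmless observation that every T-knot is a T-link, which is exactly what is needed in order for Theorem~\ref{fulltwist} to apply to $K$. All of the substantive work — identifying the braid index of $K$ as $ab$ via \cite[Theorem 1]{Williams}, building the minimal positive braid $B'$ induced from the minimal positive braid of $T(a,a+1)$, counting its crossings, and deriving the contradiction with Lemma~\ref{crossings} through the numerical hypothesis on $(ab-b)b(a+1)+k(b-1)+d_n(c_n-1)+\dots+d_1(c_1-1)$ — has already been carried out in the preceding Theorem. Consequently the corollary introduces no new estimates of its own.
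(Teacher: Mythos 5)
Your proof is correct and matches the paper's own argument exactly: the paper also deduces the corollary as a formal contradiction between Theorem~\ref{fulltwist} (every T-link, hence every T-knot, has a positive braid with a positive full twist) and the preceding theorem (no such braid exists for $K$). Your write-up merely makes explicit what the paper states in a single sentence.
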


\begin{proof} 
It follows from Theorem~\ref{fulltwist} and Corolary~\ref{corollary1}.
\end{proof}

\begin{corollary}\label{maincorollary}
Consider $a, b$ integers greater than one.
The satellite knot with pattern the torus knot $T(b, (a-1)(a+1)b + 1)$ and companion the torus knot $T(a, a+1)$ is not a T-knot.
\end{corollary}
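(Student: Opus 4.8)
The plan is to obtain this statement as the degenerate special case of the preceding corollary in which there are no auxiliary pairs (that is, $n = 0$) and $k = 1$, so that the entire argument reduces to checking that the two hypotheses of that corollary hold for the pattern $T(b, (a-1)(a+1)b+1)$. First I would record that the torus knot $T(b, (a-1)(a+1)b+1)$ is precisely the one-term T-link $T((b, (a-1)(a+1)b+1))$, i.e.\ the T-knot appearing in the preceding corollary with the list $(c_1,d_1),\dots,(c_n,d_n)$ empty and with $k=1$. The assumption $b \geq 2$ guarantees that this is an admissible T-link, since the single index $r_1 = b$ satisfies $r_1 \geq 2$, and the assumption $a \geq 2$ guarantees that the companion $T(a,a+1)$ is a genuine nontrivial torus knot, so the satellite construction is legitimate.

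Next I would verify the first hypothesis, namely that the pattern is a knot rather than a proper link. Since $(a-1)(a+1)b + 1 \equiv 1 \pmod{b}$, we have $\gcd\bigl(b,\,(a-1)(a+1)b+1\bigr) = \gcd(b,1) = 1$, so $T(b,(a-1)(a+1)b+1)$ is indeed a knot for all $a,b \geq 2$.

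The only genuine computation is the crossing-number inequality, which is the second hypothesis. With $n=0$ and $k=1$ the terms $d_i(c_i-1)$ all vanish, so the left-hand side is $(ab-b)b(a+1) + (b-1)$; using $ab-b = b(a-1)$ this collapses to $b^2(a^2-1) + (b-1) = a^2b^2 - b^2 + b - 1$, while the right-hand side $ab(ab-1)+(ab-2)$ simplifies to $a^2b^2 - 2$. Hence the required inequality is equivalent to $-b^2 + b - 1 \leq -2$, that is, $b^2 - b - 1 \geq 0$, which holds for every integer $b \geq 2$ (its value already equals $1$ at $b=2$ and is increasing thereafter). With both hypotheses confirmed, the preceding corollary applies directly and yields that $K$ is not a T-knot. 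The main — and essentially only — obstacle is spotting the algebraic cancellation of the $a^2b^2$ terms, after which the inequality reduces to a condition on $b$ alone and is immediate; it is worth noting that the parameter $a$ drops out entirely, so the conclusion is uniform in $a$.
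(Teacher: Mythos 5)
Your proposal is correct and follows essentially the same route as the paper: both treat the statement as the $n=0$, $k=1$ specialization of the preceding corollary and reduce the crossing-number hypothesis, after the $a^2b^2$ terms cancel, to the inequality $b^2 - b - 1 \geq 0$ for $b \geq 2$. Your explicit verification that $\gcd\bigl(b,(a-1)(a+1)b+1\bigr)=1$ (so the pattern is a knot) is a small point the paper leaves implicit, but otherwise the two arguments coincide.
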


\begin{proof} 
As $b^2-b+1\geq0$, the result follows from Corolary~\ref{corollary2}.
\end{proof}

Therefore, we have proved Corolary~\ref{maintheorem}.

\begin{remark}\label{remark}
From the last corollary, the satellite knot with pattern the torus knot $T(b, (a-1)(a+1)b + 1)$ and companion the torus knot $T(a, a+1)$ is not a T-knot. However, Theorem 6.2 of \cite {Knottedperiodicorbits} states that they are T-knots. Hence we conclude that the authors were not considering any fixed longitudes in this result.
If we allow a homeomorphism (see the definition of satellite knots in the third paragraph in the introduction) to add as many full twists as we want on the pattern, then we can see that the satellite knot with pattern the torus knot $T(b, (a-1)(a+1)b + 1)$ and companion the torus knot $T(a, a+1)$ is a T-knot \cite[Theorem 6.2]{Knottedperiodicorbits}. However, when we do that, we break the convention that this homeomorphism must send the standard longitudes to the standard longitudes, and then the satellite operation is not well-defined. 
\end{remark}

\bibliographystyle{amsplain}  

\bibliography{Satellite}

\end{document}